\documentclass{amsart}
\usepackage{graphics}
\usepackage{graphicx}
\usepackage{epsfig}
\usepackage{amscd}
\usepackage{amssymb}
\usepackage{amsthm}
\usepackage{amsmath}
\numberwithin{equation}{section}
\usepackage{mathrsfs}
\usepackage{color}
\usepackage{hyperref}
\usepackage{bm}
\usepackage{float}
\usepackage{geometry}
\usepackage{tikz}
\usepackage{multirow}
\usepackage{verbatim}
\usepackage{geometry}
\usepackage{placeins}

\usetikzlibrary{matrix,arrows,arrows.meta,calc}
\usetikzlibrary{decorations.pathreplacing,decorations.markings}
\usetikzlibrary{shapes,positioning}
\tikzstyle arrowstyle=[scale=1]
\tikzstyle directed=[postaction={decorate,decoration={markings,
    mark=at position .5 with {\arrow[arrowstyle]{stealth}}}}]
\tikzstyle reverse directed=[postaction={decorate,decoration={markings,
    mark=at position .65 with {\arrowreversed[arrowstyle]{stealth};}}}]

\newtheorem{thm}{Theorem}[section]

\newtheorem{conj}[thm]{Conjecture}

\theoremstyle{plain}
\newtheorem{lem}[thm]{Lemma}

\theoremstyle{remark}

\theoremstyle{definition}

\newtheorem{ex}[thm]{Example}

\numberwithin{equation}{section}

\newcommand{\R}{\mathbb{R}}
\newcommand{\Z}{\mathbb{Z}}

\newcommand{\A}{\mathcal{A}}
\newcommand{\B}{\mathcal{B}}

\newcommand{\T}{\mathcal{T}}
\newcommand{\M}{\mathcal{M}}
\newcommand{\Ch}{\mathrm{Ch}}

\newcommand{\VG}{\mathrm{VG}}
\newcommand{\sgn}{\mathrm{sgn}}

\begin{document}

\title{Cremona invariance of filtered Varchenko--Gelfand algebras}


\author{Ye Liu}
\address{Department of Pure Mathematics, Xi'an Jiaotong-Liverpool University, Suzhou, Jiangsu 215123, P. R. China}
\email{yeliumath@gmail.com}


\date{}

\subjclass[2020]{Primary 52C35; Secondary 05B35}
\keywords{hyperplane arrangement, tope graph, Varchenko--Gelfand algebra}

\begin{abstract}
We prove that the filtered Varchenko--Gelfand algebra is invariant under a natural Cremona operation on a class of real hyperplane arrangements.  Namely, suppose that an arrangement contains all coordinate hyperplanes and that every remaining defining form is supported on two coordinates.  Swapping the two coefficients in each such form produces its Cremona transform.  Coordinatewise inversion gives a chamber bijection and an isomorphism of the corresponding filtered Varchenko--Gelfand algebras over every commutative coefficient ring.  As an application, we exhibit two arrangements of eight central planes in $\R^3$ with isomorphic filtered Varchenko--Gelfand algebras but non-isomorphic tope graphs. This disproves a conjecture of Yagi--Yoshinaga on reconstructing tope graphs from filtered Varchenko--Gelfand algebras.
\end{abstract}

\maketitle

\tableofcontents

\section{Introduction}

Let $\A=\{H_1,\ldots,H_n\}$ be a central hyperplane arrangement in a real vector space $V$.  Its complement 
\[
M(\A)=V\setminus \bigcup_{i=1}^nH_i
\]
is a disjoint union of open polyhedral cones, called \emph{chambers}. Let $\Ch(\A)$ be the set of chambers. After choosing a defining form $\alpha_i$ for each hyperplane $H_i$, every chamber determines a sign vector in $\{+,-\}^n$.  The \emph{tope graph} $\T(\A)$ has vertex set $\Ch(\A)$, with two chambers adjacent when they are separated by exactly one hyperplane.  Thus the tope graph records the codimension-one incidence of chambers.  In oriented matroid language, the chambers are the topes, and the tope graph determines the oriented matroid up to relabeling and reorientation (see \cite[Theorem~4.2.14]{Bjorner1999}).

The Varchenko--Gelfand (VG) algebra packages the same chamber set in a different way.  For a commutative ring $R$, let
\[
\VG(\A)_R=R^{\Ch(\A)}
\]
be the set of $R$-valued functions on $\Ch(\A)$ with pointwise addition and multiplication.  As an algebra it is merely a product of copies of $R$, one for each chamber.  The arrangement enters through a natural filtration introduced by Gelfand and Varchenko \cite{Gelfand1987}.  For a defining form $\alpha_i$, its Heaviside function is
\[
h_{\alpha_i}(C)=
\begin{cases}
1,&\alpha_i>0\text{ on }C,\\
0,&\alpha_i<0\text{ on }C.
\end{cases}
\]
The functions $h_{\alpha_i}$ generate $\VG(\A)_R$, and $F^p\VG(\A)_R$ is the $R$-span of products of at most $p$ Heaviside functions.  The filtration is independent of the orientations because $h_{-\alpha_i}=1-h_{\alpha_i}$.  Its associated graded algebra recovers classical combinatorial information: the graded-rank generating function is the Poincar\'e polynomial of the complexified complement.  In characteristic $2$ it is closely related to the Orlik--Solomon algebra, while in characteristic different from $2$ it can retain genuinely oriented information; see \cite{Gelfand1987,Yagi2026}.

Yagi--Yoshinaga proved that the filtered and associated graded VG algebras determine the underlying oriented matroid when $\mathrm{char}R\neq 2$ and the arrangement is generic in codimension $2$ \cite[Theorems~3.6 and~5.3]{Yagi2026}.  In the non-generic case, odd rank-two pencils can create additional degree-one idempotents, called generalized Heaviside functions.  Motivated by their analysis of these extra degree-one elements, they proposed the following reconstruction conjecture.

\begin{conj}[Yagi--Yoshinaga \cite{Yagi2026}]\label{YY}
Let $\A$ and $\B$ be real central hyperplane arrangements and let $R$ be an integral domain with $\operatorname{char}R\neq2$.  If
\[
\VG(\A)_R\cong \VG(\B)_R
\]
as filtered $R$-algebras, then
\[
\T(\A)\cong\T(\B).
\]
\end{conj}

The main result of this note provides a systematic source of filtered VG algebra isomorphisms that need not preserve tope graphs. After proving the Cremona transformation preserves the filtered VG algebras of a certain class of arrangements, we exhibit a pair of such arrangements with isomorphic filtered VG algebras but non-isomorphic tope graphs. We conclude this note with a discussion on the relation with combinatorial Cremona maps.

\section{Results}\label{cremona}

Suppose that $\A$ is an arrangement in $\R^r$ with coordinates $x_1,\ldots,x_r$ containing all the coordinate hyperplanes $x_i=0$, and that every other defining form is supported on two coordinates:
\[
\alpha_e=a_ex_{i(e)}+b_ex_{j(e)},
\qquad i(e)\neq j(e),\qquad a_eb_e\neq0.
\]
We define the \emph{Cremona transform} $\A^\kappa$ as an arrangement in $\R^r$ with coordinates $X_1,\ldots,X_r$, by retaining the coordinate hyperplanes $X_i=0$ and replacing $\alpha_e$ with
\[
\beta_e=b_eX_{i(e)}+a_eX_{j(e)}.
\]
This operation is induced on the complements by coordinatewise inversion.

\begin{thm}\label{main}
For every commutative ring $R$, coordinatewise inversion
\[
\kappa(x_1,\ldots,x_r)=(X_1,\ldots,X_r)=\left(x_1^{-1},\ldots,x_r^{-1}\right)
\]
induces an isomorphism of filtered $R$-algebras
\[
\kappa^*:\VG(\A^\kappa)_R\xrightarrow{\ \cong\ }\VG(\A)_R.
\]
Equivalently,
\[
\kappa^*F^p\VG(\A^\kappa)_R=F^p\VG(\A)_R
\qquad\text{for every }p\geq0.
\]
\end{thm}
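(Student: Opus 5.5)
First I will check that $\kappa$ induces a bijection $\Ch(\A)\to\Ch(\A^\kappa)$. Since $\A$ contains all coordinate hyperplanes, every chamber $C$ lies in an open orthant, and $\kappa$ is a homeomorphism of the open torus $(\R^\times)^r$ preserving each orthant. For $x\in(\R^\times)^r$ and $X=\kappa(x)$ we have
\[
\beta_e(X)=\frac{b_e}{x_{i(e)}}+\frac{a_e}{x_{j(e)}}=\frac{a_ex_{i(e)}+b_ex_{j(e)}}{x_{i(e)}x_{j(e)}}=\frac{\alpha_e(x)}{x_{i(e)}x_{j(e)}}.
\]
Hence $x\in M(\A)$ iff $\kappa(x)\in M(\A^\kappa)$. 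Thus $\kappa$ restricts to a homeomorphism $M(\A)\to M(\A^\kappa)$, so it maps connected components bijectively. Therefore $\kappa^*:R^{\Ch(\A^\kappa)}\to R^{\Ch(\A)}$, $f\mapsto f\circ\kappa$, is an isomorphism of (unfiltered) $R$-algebras.

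Next I will compute the pullbacks of the Heaviside generators. For the coordinate forms, $\sgn X_i=\sgn x_i$, so $\kappa^*h_{X_i}=h_{x_i}$. For the other forms, the displayed identity gives $\sgn\beta_e(\kappa x)=\sgn\alpha_e(x)\,\sgn x_{i(e)}\,\sgn x_{j(e)}$. Write $u=h_{\alpha_e}$, $s=h_{x_{i(e)}}$, $t=h_{x_{j(e)}}$, and pass to $\pm1$-valued signs via $\sgn=2h-1$. The indicator of a triple sign product being positive is the parity function
\[
\kappa^*h_{\beta_e}=u+s+t-2(us+ut+st)+4ust.
\]
This expression does not need $2$ to be invertible, since it is a polynomial identity in $0/1$-valued functions, verified on the eight cases. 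The difficulty is that it has degree $3$, so naive substitution only shows $\kappa^*F^p\subseteq F^{3p}$. The main work is therefore to show that $\kappa^*$ preserves the filtration degree exactly.

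To get exactness I will use the filtration more cleverly, working orthant by orthant. The key observation is that $F^p\VG(\A)_R$ is spanned by products $h_{\gamma_1}\cdots h_{\gamma_q}$ with $q\le p$, where each $\gamma$ is any defining form with any orientation, since $h_{-\gamma}=1-h_\gamma$. Moreover, multiplying by coordinate Heaviside functions localizes to orthants. Concretely, given a monomial $m=h_{\beta_{e_1}}\cdots h_{\beta_{e_q}}\cdot\prod h_{X_k}$, I will choose orientations cleverly. On the orthant where $x_{i(e)}x_{j(e)}>0$ we have $\kappa^*h_{\beta_e}=h_{\alpha_e}$, and where $x_{i(e)}x_{j(e)}<0$ we have $\kappa^*h_{\beta_e}=h_{-\alpha_e}$. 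Thus
\[
\kappa^*h_{\beta_e}=h_{\alpha_e}\cdot\epsilon_e+(1-h_{\alpha_e})(1-\epsilon_e),
\]
where $\epsilon_e$ is the indicator of $x_{i(e)}x_{j(e)}>0$, itself a degree-$2$ element of $F^2$. The obstacle is that the coordinate factors still raise the degree. My plan is an induction on $p$ that absorbs them. Decompose $1=\sum_{\sigma}\chi_\sigma$ over orthants $\sigma$, where $\chi_\sigma=\prod_k h_{\pm x_k}$ has degree $r$, which is too large. So instead I will argue via the associated graded, or via a dimension/rank count combined with the inclusion $\kappa^*F^p(\A^\kappa)\subseteq F^p(\A)$ plus the symmetric inclusion, noting that $(\A^\kappa)^\kappa=\A$ and $\kappa$ is an involution.

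Symmetry thus reduces everything to the single inclusion $\kappa^*F^p\subseteq F^p$. For that inclusion, I will prove the following lemma. For any distinct forms $\gamma_1,\dots,\gamma_q$ among the defining forms, the product $\prod h_{\gamma_l}$ lies in the span of products of at most $q$ Heaviside functions of forms vanishing on the flat $X=\bigcap\ker\gamma_l$ (Gelfand--Varchenko locality). Applying this locally, the product $\prod_l\kappa^*h_{\beta_{e_l}}\prod h_{x_k}$ involves only the coordinates appearing. I would then handle the case by restricting to the subarrangement supported on the coordinates involved, and showing directly that the parity expression reduces modulo $F^{q-1}$-terms. For a single factor, $\kappa^*h_{\beta_e}=h_{\alpha_e}h_{x_i}h_{x_j}+\dots$, and the degree-$3$ term $h_{\alpha_e}h_{x_i}h_{x_j}$ must be shown to lie in $F^2$ using the rank-$2$ relation among $\alpha_e,x_i,x_j$. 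This uses the fact that three forms in a rank-$2$ pencil have a product of Heaviside functions expressible by degree $\le2$ terms: in a pencil of three lines, one sign pattern is impossible, giving a cubic relation. This rank-deficiency relation, generalized to products over several edges via the lower-degree relations coming from dependent sets, is the step I expect to be the main obstacle.
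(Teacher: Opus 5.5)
Your setup is sound and matches the paper: the chamber bijection, the parity formula for $\kappa^*h_{\beta_e}$, and the reduction of equality to the single inclusion via $(\A^\kappa)^\kappa=\A$ and $\kappa^2=\mathrm{id}$. The gap is the central step. You leave it as ``the main obstacle'', and you aim it at the wrong target. Showing only that the cubic term $h_{\alpha_e}h_{x_i}h_{x_j}$ lies in $F^2$ would leave the quadratic terms $-2(us+ut+st)$ in place. That gives at best $\kappa^*F^1\subseteq F^2$, hence only $\kappa^*F^p\subseteq F^{2p}$. The detours you propose (cross-edge relations from dependent sets, the ``locality'' lemma, associated graded or rank counts) are not made precise and are not needed.

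There is also a miscount. In the pencil $\{x_i,x_j,\alpha_e\}$, not one but an antipodal pair of sign patterns is impossible. With $u=a_ex_i$ and $v=b_ex_j$, the patterns $(s_u,s_v,s_{u+v})=(+,+,-)$ and $(-,-,+)$ never occur. A single excluded pattern only yields a relation like $h_uh_vh_{u+v}=h_uh_v$, which lowers the degree from $3$ to $2$. Using both exclusions makes the entire parity function affine-linear on the six realizable patterns:
\[
s_us_vs_{u+v}=s_u+s_v-s_{u+v}.
\]

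That identity is the missing idea. Write $\varepsilon_e=\sgn a_e$ and $\delta_e=\sgn b_e$. Then
\[
\kappa^*s_{\beta_e}=s_{x_i}s_{x_j}s_{\alpha_e}=\varepsilon_e\delta_e\,s_us_vs_{u+v},
\]
and the identity gives
\[
\kappa^*h_{\beta_e}=\delta_eh_{x_i}+\varepsilon_eh_{x_j}-\varepsilon_e\delta_eh_{\alpha_e}+\frac{(1-\varepsilon_e)(1-\delta_e)}{2}.
\]
This is an affine-linear combination with integer coefficients (the constant is $0$ or $2$), so it holds over every $R$. Hence $\kappa^*h_{\beta_e}$ already lies in $F^1$, alongside $\kappa^*h_{X_i}=h_{x_i}$.

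The rest is immediate. $\kappa^*$ is a ring homomorphism, and $F^p$ is spanned by products of at most $p$ elements of $F^1$. So $\kappa^*F^p\subseteq F^p$ follows at once, with no interaction between different edges to analyze. Your symmetry argument then completes the proof.
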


We first record the elementary rank-two identity underlying the construction.  For a nonzero real number $u$, write $s_u=\sgn(u)\in\{\pm1\}$ and $h_u=(s_u+1)/2\in\{0,1\}$.

\begin{lem}\label{sign-identity}
If $u$, $v$, and $u+v$ are nonzero real numbers, then
\[
s_us_vs_{u+v}=s_u+s_v-s_{u+v}.
\]
\end{lem}

\begin{proof}
The sign patterns $(+1,+1,-1)$ and $(-1,-1,+1)$ cannot occur for $(s_u,s_v,s_{u+v})$.  The identity is immediate on each of the six remaining sign patterns.
\end{proof}

We now prove the main theorem.

\begin{proof}[Proof of Theorem \ref{main}]
Because $\A$ and $\A^\kappa$ contain all coordinate hyperplanes, both complements lie in the real torus $(\R^\times)^r$.  For a noncoordinate defining form of $\A$
\[
\alpha_e=a_ex_i+b_ex_j
\]
and its transformed form in $\A^\kappa$
\[
\beta_e=b_eX_i+a_eX_j,
\]
we have
\begin{equation}\label{pullback}
\kappa^*\beta_e
=\frac{b_e}{x_i}+\frac{a_e}{x_j}
=\frac{a_ex_i+b_ex_j}{x_ix_j}
=\frac{\alpha_e}{x_ix_j}.
\end{equation}
Thus $\kappa$ restricts to a homeomorphism of complements
\[
\kappa:M(\A)\xrightarrow{\ \cong\ }M(\A^\kappa)
\]
and induces a bijection of chamber sets $\kappa: \Ch(\A)\to\Ch(\A^\kappa)$.  Pullback along this chamber bijection is therefore an isomorphism of the underlying function algebras.

It remains to check the filtration.  Put
\[
\varepsilon_e=\sgn(a_e),\qquad \delta_e=\sgn(b_e).
\]
Equation \eqref{pullback} gives
\[
\kappa^*s_{\beta_e}=s_{x_i}s_{x_j}s_{\alpha_e}.
\]
Apply Lemma \ref{sign-identity} to $u=a_ex_i$ and $v=b_ex_j$.  Since
\[
s_u=\varepsilon_es_{x_i},\qquad
s_v=\delta_es_{x_j},\qquad
s_{u+v}=s_{\alpha_e},
\]
we obtain
\begin{equation}\label{sign-pullback}
\kappa^*s_{\beta_e}
=\delta_es_{x_i}+\varepsilon_es_{x_j}
-\varepsilon_e\delta_es_{\alpha_e}.
\end{equation}
Substituting $s=2h-1$ and simplifying as an identity of $\Z$-valued functions yields
\begin{equation}\label{heaviside-pullback}
\kappa^*h_{\beta_e}
=\delta_eh_{x_i}+\varepsilon_eh_{x_j}
-\varepsilon_e\delta_eh_{\alpha_e}
+\frac{(1-\varepsilon_e)(1-\delta_e)}{2}.
\end{equation}
The final constant is either $0$ or $2$, so the right-hand side is an integral affine-linear combination of Heaviside functions.  For the coordinate generators one simply has
\[
\kappa^*h_{X_i}=h_{x_i}.
\]
Consequently,
\[
\kappa^*F^1\VG(\A^\kappa)_R\subseteq F^1\VG(\A)_R
\]
for every commutative ring $R$.  Because the filtration is multiplicatively generated in degree one, it follows that
\[
\kappa^*F^p\VG(\A^\kappa)_R\subseteq F^p\VG(\A)_R
\qquad\text{for all }p\geq0.
\]
Finally, $(\A^\kappa)^\kappa=\A$ and $\kappa^2=\mathrm{id}$ on the torus.  Applying the same argument in the opposite direction gives the reverse inclusion.  Hence every filtered piece is preserved.
\end{proof}

\begin{ex}\label{counterexample}
    We define a central arrangement $\A$ in $\R^3$ by defining forms:
\[
Q_\A=xyz(x+y)(x+2y)(x+z)(y-2z)(2y-z).
\]
Then its Cremona transform $\A^\kappa$ is defined by \[
Q_{\A^\kappa}=XYZ(X+Y)(2X+Y)(X+Z)(Z-2Y)(2Z-Y).
\]

See Figure \ref{deconing} for the deconing of $\A$ and $\A^\kappa$ at $z=1$ and $Z=1$ respectively. By Theorem \ref{main}, $\A$ and $\A^\kappa$ have isomorphic filtered VG algebras. However, $\T(\A)$ has a pair of antipodal vertices of degree $5$ (one can be seen from the deconing of $\A$ as the pentagon chamber bounded by the lines $x=0,x+1=0,y-2=0,2y-1=0$ and $x+y=0$), while $\T(\A^\kappa)$ has no vertices of degree $5$. So $\T(\A)\not\cong\T(\A^\kappa)$. Hence they give a counterexample to Conjecture \ref{YY}.
\end{ex}

\begin{figure}
    \centering
\begin{tikzpicture}[scale=0.8]

    \begin{scope}
        \draw[step=1cm, gray!30, very thin] (-4.5, -2.5) grid (2.5, 3.5);
        
        \draw[thick, blue] (0, -2.5) -- (0, 3.5) node[above right] {$x=0$};
        \draw[thick, red] (-4.5, 0) -- (2.5, 0) node[right] {$y=0$};
        \draw[thick, green!60!black] (-3.5, 3.5) -- (2.5, -2.5) node[below right] {$x+y=0$};
        \draw[thick, orange] (-4.5, 2.25) -- (2.5, -1.25) node[right] {$x+2y=0$};
        \draw[thick, purple] (-1, -2.5) -- (-1, 3.5) node[above] {$x+1=0$};
        \draw[thick, cyan] (-4.5, 2) -- (2.5, 2) node[right] {$y-2=0$};
        \draw[thick, magenta] (-4.5, 0.5) -- (2.5, 0.5) node[right] {$2y-1=0$};

        \filldraw[black] (0,0) circle (1.5pt); 
    \end{scope}

    \begin{scope}[shift={(9.5,0)}]
        \draw[step=1cm, gray!30, very thin] (-3.5, -2.5) grid (3.5, 3.5);
        
        \draw[thick, blue] (0, -2.5) -- (0, 3.5) node[above right] {$X=0$};
        \draw[thick, red] (-3.5, 0) -- (3.5, 0) node[right] {$Y=0$};
        \draw[thick, green!60!black] (-3.5, 3.5) -- (2.5, -2.5) node[below right] {$X+Y=0$};
        \draw[thick, orange] (-1.75, 3.5) -- (1.25, -2.5) node[below] {$2X+Y=0$};
        \draw[thick, purple] (-1, -2.5) -- (-1, 3.5) node[above] {$X+1=0$};
        \draw[thick, magenta] (-3.5, 2) -- (3.5, 2) node[right] {$Y-2=0$};
        \draw[thick, cyan] (-3.5, 0.5) -- (3.5, 0.5) node[right] {$2Y-1=0$};

        \filldraw[black] (0,0) circle (1.5pt); 
    \end{scope}

\end{tikzpicture}
    
    \caption{Left: deconing of $\A$ at $z=1$; Right: deconing of $\A^\kappa$ at $Z=1$.}
    \label{deconing}
\end{figure}

\section{Relation with combinatorial Cremona maps}

The construction in Section \ref{cremona} is the realizable, real-sign counterpart of the combinatorial Cremona maps introduced by Shaw--Werner \cite{Shaw2023} and further studied by Rettenmayr--Werner \cite{Rettenmayr2025}.  Let $\M=\M(\A)$ be the matroid represented by the defining forms of $\A$, and let
\[
b=\{x_1,\ldots,x_r\}
\]
be the basis corresponding to the coordinate hyperplanes.  For $i<j$, set
\[
F_{ij}=\operatorname{cl}_\M\{x_i,x_j\}\setminus\{x_i,x_j\}.
\]
The hypothesis of Theorem \ref{main} says precisely that the sets $F_{ij}$ partition $E(\M)\setminus b$, that is, a nonbasis defining form belongs to $F_{ij}$ exactly when it is supported on the two coordinates $x_i$ and $x_j$.  When $\M$ is connected, Shaw--Werner's criterion \cite[Theorem~8.3]{Shaw2023} says that this is exactly the condition under which $b$ defines a combinatorial Cremona automorphism of the coarse Bergman fan $B_c(\M)$.  Such a basis is called a \emph{Cremona basis} in \cite{Rettenmayr2025}.

In the realizable setting, the two constructions arise from the same standard Cremona transformation.  Indeed, for $e\in F_{ij}$, Equation \eqref{pullback} reads
\[
\kappa^*\beta_e=\frac{\alpha_e}{x_ix_j}.
\]
Tropicalizing this monomial relation gives the combinatorial Cremona map on the Bergman fan.  Taking real signs instead, and applying Lemma \ref{sign-identity}, gives the affine-linear Heaviside identity \eqref{heaviside-pullback}, which is the additional ingredient that proves preservation of the VG filtration.  Thus the work of Shaw--Werner records the tropical, unoriented invariance of this Cremona move, while Theorem \ref{main} records its effect on real chamber functions.

Rettenmayr--Werner \cite{Rettenmayr2025} introduced support graphs for matroids with a Cremona basis and used them to study the structure of such matroids, the interaction of distinct Cremona bases, realizability questions, and coarse Bergman fan automorphisms of Coxeter arrangement matroids.  In Example \ref{counterexample}, with $b=\{x,y,z\}$, one has
\[
F_{xy}=\{x+y,x+2y\},\qquad
F_{xz}=\{x+z\},\qquad
F_{yz}=\{y-2z,2y-z\}.
\]
Hence their support graph is a triangle with edge multiplicities $2,1,2$.  Their theory concerns the underlying unoriented matroid, whereas the tope graph depends on its orientation.  Our counterexample shows that the same Cremona move can preserve both the coarse Bergman fan and the filtered VG algebra while changing the tope graph.

\section*{Acknowledgment}

The author thanks Masahiko Yoshinaga for his valuable comments on this paper.

\renewcommand\refname{References}
\bibliographystyle{hep}
\bibliography{VGvsTope}

\end{document}